\definecolor{darkblue}{rgb}{0,0,0.7}
\crefname{algocf}{Algorithm}{Algorithms}
\crefname{equation}{Equation}{Equations} 
\crefname{figure}{Figure}{Figures}
\crefname{enumi}{}{}
\newtheorem{theorem}{Theorem}[section]
\newtheorem{lemma}[theorem]{Lemma}
\newtheorem{proposition}[theorem]{Proposition}
\title{An Unsure Note on an Un-Schur Problem}
\author[1,2]{Olaf Parczyk}
\author[1,3]{Christoph Spiegel}
\affil[1]{\small Zuse Institute Berlin, Department AIS2T, \emph{lastname}@zib.de}
\affil[2]{\small Freie Universit\"at Berlin, Institute of Mathematics}
\affil[3]{\small Technische Universit\"at Berlin, Institute of Mathematics}
\date{}
\begin{document}

\maketitle

\begin{abstract}
    Graham, Rödl, and Ruciński~\cite{graham1996schur} originally posed the problem of determining the minimum number of monochromatic Schur triples that must appear in any 2-coloring of the first $n$ integers. This question was subsequently resolved independently by Datskovsky~\cite{datskovsky2003number}, Schoen~\cite{schoen1999number}, and Robertson and Zeilberger~\cite{robertson19982}. Here we suggest studying a natural anti-Ramsey variant of this question and establish the first non-trivial bounds by proving that the maximum fraction of Schur triples that can be rainbow in a given $3$-coloring of the first $n$ integers is at least $0.4$ and at most $0.66364$. We conjecture the lower bound to be tight. This question is also motivated by a famous analogous problem in graph theory due to Erd\H{o}s and S{\'o}s~\cite{erdos1972ramsey, nesetril2012mathematics} regarding the maximum number of rainbow triangles in any $3$-coloring of $K_n$, which was settled by Balogh et al.~\cite{Balogh2017}.
\end{abstract}

\section{Introduction}

In 1916, Schur proved that every $c$-coloring of $[n] := \{1, . . . , n\}$ contains a monochromatic Schur triple, that is a solution to the equation $x + y = z$, assuming $n$ is large enough depending on $c$. Having established the Ramsey-type property (before Ramsey's theorem had even been formulated), a natural question is to ask for the minimum number of Schur triples that any finite coloring of $[n]$ asymptotically has to contain. For a proof that supersaturation holds, that is that asymptotically some non-zero fraction of Schur triples needs to be monochromatic, see Frankl, Graham and R\"{o}dl~\cite{frankl1988quantitative}. Graham, R\"{o}dl, and Ruci\'nski~\cite{graham1996schur}, while studying the threshold behavior of random sets which commonly requires supersaturation-type arguments, observed the first non-trivial lower bound through a famous result of Goodman~\cite{goodman1959sets}. Chen and Graham proposed it as a \$100 problem at SOCA 96~\cite{robertson19982} and it was also conjectured that a construction attributed to Zeilberger should give the true minimum value. This conjecture was resolved in the affirmative independently by Datskovsky~\cite{datskovsky2003number}, Schoen~\cite{schoen1999number}, as well as Robertson and Zeilberger~\cite{robertson19982}. Datskovsky also showed the somewhat surprising results that the number of monochromatic Schur-triples in $\mathbb{Z}_n$ only depends on the cardinality of the color classes, a sort of strong Sidorenko-type property. Cameron, Cilleruelo, and Serra~\cite{Cameron2007} generalized this observation using purely combinatorial arguments to a much broader setting.

Seven decades after Schur's result, Alekseev and Savchev~\cite{alekseev1987problem} considered the {\lq}un-Schur{\rq} problem (a terminology attributed to Sands), proving that every equinumerous $3$-coloring of $[3n]$ must contain a rainbow Schur triple. The equinumerous requirement was later weakened, going as low as $1/4$ for the density of the smallest color class~\cite{schonheim1990partitions}. The \emph{maximum} number of rainbow Schur triples appears to have surprisingly not been studied in additive combinatorics, while the analogue question in graph theory has received a fair amount of attention~\cite{erdos1972ramsey, nesetril2012mathematics, Balogh2017, chao2023kruskal}.
Here we prove the following.
\begin{theorem}\label{thm:rainbow-schur-triple}
	The maximum fraction of Schur triples that can be rainbow in a $3$-coloring of the first $n$ integers asymptotically is between $0.4$ and $0.66364$.
\end{theorem}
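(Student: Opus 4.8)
The plan is to pass to an analytic limit object and establish both bounds there. Rescaling $[n]$ to $[0,1]$, a $3$-colouring is described in the limit $n\to\infty$ by a measurable $f\colon[0,1]\to\{1,2,3\}$ — or, if one wants to allow fractional colourings, by a triple of nonnegative measures $\mu_1,\mu_2,\mu_3$ summing to Lebesgue measure — in the sense that the colour densities of subintervals, and hence the Schur-triple counts, are captured by $f$; the back-and-forth between finite colourings and such limits is routine (weak-$*$ compactness). Since the number of (ordered) Schur triples $x+y=z$ in $[n]$ is asymptotically $n^{2}/2$, the rainbow fraction converges to
\[
  \operatorname{rb}(f)\;=\;2\iint_{\substack{x,y\ge 0\\ x+y\le 1}}\mathbf{1}\bigl[f(x),f(y),f(x+y)\ \text{pairwise distinct}\bigr]\,dx\,dy ,
\]
so it suffices to bound $\operatorname{rb}(f)$.

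For the upper bound I would first prove the weak estimate $\operatorname{rb}(f)\le 2/3$. Fix $z$ and write a triple through $z$ as $(x,z-x,z)$ with $x\in[0,z]$; it can be rainbow only if $f(x)\ne f(z)$ and $f(z-x)\ne f(z)$, so the measure of admissible $x$ is at most $z-\lvert f^{-1}(f(z))\cap[0,z]\rvert$. Integrating over $z$ and using $\iint_{0\le x\le z\le 1}\mathbf{1}[f(x)=f(z)]\,dx\,dz=\tfrac12\sum_{c}\lvert f^{-1}(c)\rvert^{2}\ge\tfrac16$ (the last inequality by Cauchy–Schwarz, the colour densities summing to $1$) gives $\operatorname{rb}(f)\le 2(\tfrac12-\tfrac16)=\tfrac23$. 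Pushing this down to $0.66656$ is the real work: equality above forces the colouring to be perfectly balanced \emph{and} the colour class of $z$ to be symmetric about $z/2$ inside $[0,z]$ for almost every $z$ \emph{and} the discarded condition $f(x)\ne f(z-x)$ to almost never bind, and these demands are mutually inconsistent. To convert this slack into a numerical gain I would develop a flag-algebra-type relaxation fitted to the additive setting: take as variables the limiting densities of small coloured \emph{Schur gadgets} (patterns on sets such as $\{x,y,x+y\}$, $\{x,y,x+y,2x+y,x+2y\}$, or two overlapping triples $x+y=x'+y'$), record the linear identities these densities must satisfy (from the additive relations and from the averaging, or unrooting, operators), together with the positive semidefiniteness of the Gram-type matrices of partially rooted gadgets, and maximise the rainbow-triple density over the resulting semidefinite program; solving it numerically and rounding to an exact certificate should deliver $\operatorname{rb}(f)\le 0.66656$. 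The main obstacle, I expect, is that no off-the-shelf flag-algebra calculus exists for Schur triples on an interval, so the limit theory has to be built from scratch — choosing the gadgets, the gluing and averaging operators, and checking the syntactic-to-semantic correspondence that makes the semidefinite relaxation valid.

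For the lower bound I would exhibit an explicit colouring and compute $\operatorname{rb}(f)$ for it. The natural candidates are interval-based and plausibly self-similar: for instance a refinement of a \emph{feeder-and-ladder} colouring, in which one colour fills an initial block $[0,p)$ and $[p,1)$ is cut into consecutive sub-blocks — of possibly non-uniform widths — that alternate between the remaining two colours, so that a feeder element $x$, a ladder element $y$, and a sum $x+y$ landing an odd number of rungs above $y$ form a rainbow triple; or a genuinely iterated colouring in which a distinguished sub-interval carries a rescaled, colour-permuted copy of the whole colouring. For a colouring with finitely many breakpoints, $\operatorname{rb}(f)$ is an explicit piecewise-polynomial function of the breakpoints and is optimised directly; for an iterated one it satisfies a fixed-point equation solvable in closed form. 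One then checks that the best such construction attains $\operatorname{rb}(f)\ge 0.4$; we conjecture it is extremal, and closing the gap to $0.66656$ — presumably at $0.4$ — would require a substantially strengthened version of the relaxation above, which is why only a conjecture is claimed here.
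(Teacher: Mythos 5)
Both halves of your proposal have genuine gaps. For the lower bound you never exhibit a colouring or verify a rainbow density: you describe a family (one colour filling an initial block $[0,p)$, the other two alternating on blocks covering $[p,1)$) and assert that its best member reaches $0.4$, but heuristically it cannot. In such a colouring a rainbow triple must use exactly one feeder element $x$, and for typical $x$ the pair $y,\,x+y$ lands in differently coloured rungs at most about half the time, so the rainbow fraction is roughly at most $2p-3p^{2}\le 1/3$. The paper's construction has a genuinely different shape: colour $3$ on \emph{all} even integers and colours $1,2$ splitting the \emph{odd} integers at $2n/5$, so that parity forces every Schur triple to contain a colour-$3$ element and the two interval classes live on a single residue class; that mixture of modular and interval structure is what makes $0.4$ attainable. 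Without that (or some other explicitly computed) construction the lower bound is not established.

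For the upper bound, your elementary estimate via $f(x)\ne f(z)$ and Cauchy--Schwarz is correct but only yields $2/3\approx 0.66667$, which is strictly weaker than the claimed $0.66656$; the entire content of the theorem's upper bound is the gap below $2/3$, and you defer exactly that step to a flag-algebra calculus for Schur triples on an interval which you acknowledge does not exist and would have to be built from scratch. The paper's concluding remarks explicitly flag that formulating flag algebras for additive structures outside $\mathbb{F}_q^n$ has proven challenging, so this is not a routine missing detail but an open methodological problem. The paper instead avoids it entirely: it colours the edges of $K_{n+1}$ by $c'(\{u,v\})=c(|u-v|)$, so that each Schur triple $(x,y,z)$ corresponds to $n+1-z$ triangles of the same colour pattern, imports the theorem of Balogh, Hu, Lidick\'y, Pfender, Volec, and Young that at most a $(2/5+o(1))$-fraction of triangles can be rainbow, and then converts this weighted bound on $\sum_z \operatorname{r}(z)(n+1-z)$ into a bound on $\sum_z \operatorname{r}(z)$ via a reweighing lemma together with a combinatorial refinement counting elements forced to create non-rainbow triples. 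As written, your proposal proves neither bound in the statement.
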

The first main contribution of this work consists of an explicit construction giving the lower bound of $0.4$. It was found through exhaustive computations for small $n$ and interestingly combines both an interval-based and a  modular coloring approach. We conjecture the lower bound to be tight and the underlying construction unique. The second main contribution consists of establishing a non-trivial, though probably still far from tight, upper bound that can be thought of as a slightly strengthened but otherwise analogous approach to that taken in~\cite{graham1996schur}; in particular, our argument relies on the rainbow triangle result of Balogh et al.~\cite{Balogh2017} similar to how theirs relied on Goodman's results~\cite{goodman1959sets}. We discuss why other approaches do not seem to be easily applicable (motivating our title) in the last section.

\medskip
\noindent \textbf{Acknowledgements.} The research on this project was initiated during the 2023 research workshop of the Combinatorics and Graph Theory group at Freie Universität Berlin. We would like to thank Tibor Szabó for creating a nourishing research environment, both mentally and physically, as well as Micha Christoph and Cl{\'e}ment Requil{\'e} for initial discussions about the problem.

\section{Proof of \cref{thm:rainbow-schur-triple}} \label{sec:schurproof}

For us a Schur triple is an ordered triple, that is for $x + y = z$ we consider $(x, y, z)$ and $(y, x, z)$ as distinct.
This aligns with the notion in~\cite{datskovsky2003number} but differs from that in~\cite{robertson19982}, where $(x,y,z)$ and $(y,x,z)$ are the same Schur triple even if $x \neq y$, and that in~\cite{schoen1999number}, where additionally $x \neq y$. Asymptotically this of course has no impact up to a constant factor of $2$, which disappears when dealing with ratios.
One consequence is that we have exactly
\begin{equation}\label{eq:nr_of_schur_triples}
    \binom{n}{2} = \big( 1/2 + o(1) \big) \, n^2
\end{equation}
Schur triples in $[n]$, since for each $x, z \in [n]$ satisfying $x < z$ there exists a unique $y = z - x \in [n]$ completing the Schur triple $(x,y,z)$.

\subsection{The lower bound}
Combining an interval- and a mod-based approach, the coloring
\begin{equation*}
    c_0: [n] \to \{1, 2, 3\}, \quad i \mapsto \left\{\begin{array}{ll}
        1 & \text{if $i$ is odd and $i \le 2n/5$}\\
        2 & \text{if $i$ is odd and $i > 2n/5$}\\
        3 & \text{if $i$ is even}
        \end{array}\right.,
\end{equation*}
asymptotically gives $\big( 1/5 + o(1) \big) \, n^2$ rainbow Schur triples in $[n]$, i.e., a lower bound of $0.4$.
Indeed, fix an odd $x \le 2n/5$ (so $c_0(x)=1$). 
If $y$ is even ($c_0(y)=3$), then $x+y$ is odd, and the condition 
$2n/5 < x+y \le n$ ensures $c_0(x+y)=2$; 
the number of such even $y$ is $\tfrac{n-x}{2}-\tfrac{2n/5-x}{2}=\tfrac{3n}{10}$.
If $y$ is odd with $y>2n/5$ ($c_0(y)=2$) and $x+y\le n$, then $x+y$ is even and $c_0(x+y)=3$; 
the number of such $y$ is $\tfrac{n-x}{2}-\tfrac{2n/5}{2}=\tfrac{3n}{10}-\tfrac{x}{2}$.
Summing over all odd $x\le 2n/5$ and accounting for the same number of $(y,x,z)$ therefore gives
\[
2 \sum_{\substack{x\le 2n/5 \\ x\ \mathrm{odd}}}
\left(\frac{3n}{10}+\frac{3n}{10}-\frac{x}{2}\right)
=
\sum_{\substack{x\le 2n/5 \\ x\ \mathrm{odd}}}
\left(\frac{6n}{5}-x\right)
=
\left(\frac{1}{5}+o(1)\right)n^2.
\]

\subsection{The upper bound} For an arbitrary but fixed $n$ tending to infinity, let $S := \{(x, y, z) \in [n]^3 \mid x + y = z\}$ denote the set of all Schur triples in $[n]$ and $S^{(z)} := \{(x, y, z) \in S \}$ the set of all Schur triples whose largest element is some given $1 \le z \le n$. Clearly
\begin{equation}\label{eq:Szcard}
    |S^{(z)}| = z - 1.
\end{equation}
We let $c: [n] \to \{1,2,3\}$ be an arbitrary but fixed $3$-coloring of the first $n$ integers. We write $S_R^{(z)} = S_R^{(z)} (c) \subseteq S^{(z)}$ for the set of rainbow Schur triples in $S^{(z)}$ and $S_R = S_R(c) = \bigcup_{z=1}^n S_R^{(z)}$ for the set of all rainbow Schur triples when coloring $[n]$ with $c$. We also write $\operatorname{r}(z) = |S_R^{(z)}|$ and note that, by \cref{eq:Szcard}, obviously
\begin{equation}\label{eq:rzbnd}
    \operatorname{r}(z) \le |S^{(z)}| = z-1.
\end{equation}

\subsubsection*{Relating Schur triples to triangles}

Let $T = \left\{ (v_1, v_2, v_3) \mid v_1 < v_2 < v_3 \in [n + 1] \right\}$ denote the set of all triangles, identified by their \emph{ordered} vertices, in a complete graph with vertex set $[n + 1]$. Clearly
\begin{equation}\label{eq:Tcard}
    |T| = \binom{n+1}{3} = \big( 1/6 + o(1) \big) \, n^3.
\end{equation}
Let us define the function $f: T \to S, \, (v_1, v_2, v_3) \mapsto (v_2 - v_1, v_3 - v_2, v_3 - v_1)$, which is surjective and satisfies
\begin{equation}\label{eq:fmoscard}
    \left| f^{-1}(s) \right| = n + 1 - z \quad \text{for all } s \in S^{(z)}.
\end{equation}

We let the coloring $c$ of $[n]$ induce a coloring $c': E(K_{n+1}) \to \{1,2,3\}$ of the edges of $K_{n+1}$ through $c'(e) = c(\max e - \min e)$ for any $e = \{v_1, v_2\} \in E(K_{n+1})$ and write $T_R = T_R(c)$ for the set of rainbow triangles w.r.t. to that induced coloring. By Balogh et al.~\cite{Balogh2017}, we have
\begin{equation}\label{eq:balogh}
    |T_R| \le \big(2/5 + o(1) \big) \, |T| \overset{\eqref{eq:Tcard}}{=} \big(1/15 + o(1) \big) \, n^3.
\end{equation}
Note that two triangles in $T$ that are mapped to the same Schur triple by $f$ clearly are colored the same in $c'$. In particular, a triangle is rainbow with respect to $c'$ if and only if its image Schur triple is rainbow with respect to $c$, so that
\begin{equation}\label{eq:Trcard}
      \sum_{z = 1}^{n} \operatorname{r}(z) \, (n+1-z) \overset{\eqref{eq:fmoscard}}{=}  \sum_{s \in S_R} |f^{-1}(s)| = |T_R|\overset{\eqref{eq:balogh}}{\le} \big(1/15 + o(1) \big) \, n^3.
\end{equation}

\subsubsection*{A reweighing lemma}

Knowing the upper bound on $\sum_{z = 1}^{n} \operatorname{r}(z) \, (n+1-z)$ from \cref{eq:Trcard}, we want to upper bound $|S_R| = \sum_{z = 1}^{n} \operatorname{r}(z)$. For this we will make use of the following simple reweighing lemma.

\begin{lemma}\label{lemma:reweighting}
    Let $S$ be a finite set and $f, g: S \to \mathbb{R}_{\ge 0}$ positive functions. We have
    \begin{equation*}
        \sum_{s \in S} f(s) \le \sum_{s \in S_0} f_0(s)   
    \end{equation*}
    for any $S_0 \subseteq S$ and $f_0: S_0 \to \mathbb{R}$ satisfying (i) $g |_{S \setminus S_0} > 0$, (ii) $\max g |_{S_0} \le \min g |_{S \setminus S_0}$, (iii) $f |_{S_0} \le f_0$, and (iv) $\sum_{s \in S_0} f_0(s) \, g(s) \ge \sum_{s \in S} f(s) \, g(s)$.
\end{lemma}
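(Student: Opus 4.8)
The plan is to read the conditions as a transfer argument: we want to move all the "$f$-mass" of $S$ onto the small-$g$ part $S_0$, using the fact that condition (iv) gives us a $g$-weighted budget and condition (ii) says every element of $S \setminus S_0$ has at least as large a $g$-value as anything in $S_0$. First I would write $\sum_{s \in S} f(s)\,g(s) = \sum_{s \in S_0} f(s)\,g(s) + \sum_{s \in S \setminus S_0} f(s)\,g(s)$ and combine this with (iv) and (iii) to get
\begin{equation*}
    \sum_{s \in S \setminus S_0} f(s)\,g(s) \;\le\; \sum_{s \in S_0}\big(f_0(s) - f(s)\big)\,g(s).
\end{equation*}
Both sides are sums of nonnegative terms: on the left because $f,g \ge 0$, and on the right because $f|_{S_0} \le f_0$ by (iii) and $g \ge 0$.

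Next I would divide through by the threshold value. Let $m := \min g|_{S \setminus S_0}$; by (i) we have $m > 0$, and by (ii) every $s \in S_0$ satisfies $g(s) \le m$ while every $s \in S \setminus S_0$ satisfies $g(s) \ge m$. Hence on the left-hand side $f(s)\,g(s) \ge f(s)\,m$, and on the right-hand side $\big(f_0(s)-f(s)\big)\,g(s) \le \big(f_0(s)-f(s)\big)\,m$. Substituting these two bounds into the displayed inequality and cancelling the positive factor $m$ yields
\begin{equation*}
    \sum_{s \in S \setminus S_0} f(s) \;\le\; \sum_{s \in S_0}\big(f_0(s) - f(s)\big) \;=\; \sum_{s \in S_0} f_0(s) - \sum_{s \in S_0} f(s),
\end{equation*}
and rearranging gives exactly $\sum_{s \in S} f(s) = \sum_{s \in S_0} f(s) + \sum_{s \in S \setminus S_0} f(s) \le \sum_{s \in S_0} f_0(s)$, as claimed.

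The only subtlety — and the one place to be careful — is the direction of the two inequalities $f(s)g(s) \ge f(s)m$ versus $(f_0(s)-f(s))g(s) \le (f_0(s)-f(s))m$: they point opposite ways, which is precisely why we need $g$ small on $S_0$ and large on its complement (condition (ii)), and why the nonnegativity in (i) and (iii) is needed to keep the coefficients $f(s)$ and $f_0(s)-f(s)$ of the correct sign when multiplying through by $m$. I do not expect a genuine obstacle here; the statement is elementary and the proof is a two-line weighted-averaging manipulation once the inequality is split at the threshold $m$. The degenerate case $S_0 = S$ is trivial (the complement sum is empty and (iii) alone suffices), and the case $S \setminus S_0 = \emptyset$ is likewise immediate.
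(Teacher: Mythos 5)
Your proof is correct and follows essentially the same argument as the paper's: both rearrange condition (iv) into $\sum_{s \in S \setminus S_0} f(s)\,g(s) \le \sum_{s \in S_0}(f_0(s)-f(s))\,g(s)$ and then compare each side against the threshold $m = \min g|_{S\setminus S_0}$ using (ii), (iii), and the nonnegativity of $f$ and $f_0 - f$, before dividing by $m>0$ via (i). The paper merely packages these steps into a single chain of inequalities, so there is no substantive difference.
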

\begin{proof}
    With $m_g:=\min g|_{S \setminus S_0}$ we have
    \begin{align*}
         m_g \, \left( \sum_{s \in S_0} f_0(s) - \sum_{s \in S} f(s) \right) & \ge \sum_{s \in S_0} \left( f_0(s) - f(s) \right) \, g(s) - \sum_{s \in S \setminus S_0} f(s) \, g(s) \ge 0,
    \end{align*}
    where the first inequality holds by (ii) and (iii) and the second inequality holds by (iv). We may divide by $m_g$ due to (i), giving the desired inequality.
\end{proof}
Before giving the complete argument, let us first look at a more simple application of this lemma. This exactly follows the ideas of Graham, R\"odl, Ruci{\'n}ski~\cite[p. 390]{graham1996schur} for the deterministic case. We choose $S = [n]$, $f(z) = \operatorname{r}(z)$, $g(z) = n + 1 - z$, $f_0(z) = z-1$, and $S_0 = \{ z \mid z \ge z_0 \}$, where $z_0 = z_0(n) \in [n]$ is chosen maximal such that
\begin{equation} \label{eq:defz0}
     \sum_{z = 1}^{n} \operatorname{r}(z) \, (n+1-z) \le \sum_{z = z_0}^{n} (z-1) \, (n+1-z),
\end{equation}
which is well-defined by \cref{eq:rzbnd}.
The conditions of the lemma are met since (i) holds trivially, (ii) holds since $g$ is monotone decreasing, (iii) holds by \cref{eq:rzbnd}, and (iv) holds by \cref{eq:defz0}. Writing $\alpha = \alpha(n) = z_0(n)/n$, \cref{lemma:reweighting} therefore implies that
\begin{equation}\label{eq:Sralphaub}
    |S_R| = \sum_{z=1}^n \operatorname{r}(z) \le \sum_{z=z_0}^n (z-1) = \big( 1/2 - \alpha^2/2 + o(1) \big) \, n^2.
\end{equation}
It remains to lower bound $\alpha$ in order to upper bound $|S_R|$. We note that 
\begin{equation*}
     \sum_{z = 1}^{n} \operatorname{r}(z) \, (n+1-z) > \!\!\! \sum_{z = z_0+1}^{n} (z-1) \, (n+1-z)
     = \big( 1/6 - \alpha^2/2 + \alpha^3/3 + o(1) \big) \, n^3,
\end{equation*}
where the strict inequality follows by choice of $z_0$ through \cref{eq:defz0}. Using \cref{eq:Trcard}, this implies $\alpha^3/3 - \alpha^2/2 + 1/10 \le o(1)$ and, since $0 \le \alpha \le 1$, therefore
\begin{equation*}
     \alpha \ge \sin \big( \pi/6 -\operatorname{atan}(2 \, \sqrt{6})/3 \big) + 1/2 + o(1) > 0.56706 + o(1).
\end{equation*}
By \cref{{eq:Sralphaub}}, it follows that
\begin{equation}
    |S_R| \le \big( 0.33922 + o(1) \big) \, n^2,
\end{equation}
which is an upper bound on the fraction of $0.67844$, slightly worse than the upper bound reported in \cref{thm:rainbow-schur-triple}. Let us now improve the argument to push the bound an epsilon below $2/3$.

\subsubsection*{A more nuanced reweighing}

In the simplified argument we assumed that all the triples in $S^{(z)}$ are rainbow for any $z_0 \le z \le n$. This is intuitively not possible and we can take advantage of that.
Let us add the notation $\operatorname{nr}(z) = |S^{(z)} \setminus S_R^{(z)}|$. By \cref{eq:Szcard} obviously $\operatorname{nr}(z) = z-1 - \operatorname{r}(z)$. For a given coloring $c: [n] \to \{1, 2, 3\}$ and $k_0 = k_0(n) \in [n]$ to be specified later, choose $z_0 = z_0(n, c, k_0)$ maximal s.t.
\begin{equation} \label{eq:defz0_new}
     \sum_{z = 1}^{n} \operatorname{r}(z) \, (n+1-z) 
     \le \sum_{z = z_0}^{n} (z-1) \, (n+1-z) - k_0 \, \sum_{z \in Z} (n+1-z),
\end{equation}
where
\begin{equation}\label{eq:Z}
    Z = Z(n, c, k_0) = \left\{ z \ge z_0 \mid \operatorname{nr} (z) \ge k_0 \right\} \subseteq [n].
\end{equation}
Like \cref{eq:defz0}, this is again well-defined by \cref{eq:rzbnd} and now also by our choice of $Z$.
This allows us to use the improved upper bound
\begin{equation*}
    f_0(z) = \left\{\begin{array}{ll}
        z - 1 - k_0  & \text{if } z \in Z,\\
        z - 1  & \text{otherwise.}
    \end{array}\right.
\end{equation*}
Writing $\alpha = \alpha(n, c, k_0) = z_0/n$, $\beta = \beta(n, c, k_0) = |Z|/n$, and $\gamma = \gamma(n) = k_0/n$, it follows that
\begin{equation} \label{eq:SRupper}
    |S_R| \le \left(\sum_{z =z_0}^{n} z-1 \right) - \sum_{z \in Z} k_0 = \left( 1/2 - \alpha^2/2 - \beta \, \gamma + o(1) \right) \, n^2.
\end{equation}
It remains to lower bound $\beta$ and $\alpha$, or more specifically $\alpha^2/2 + \beta \, \gamma$, dependent on our choice of $\gamma$. We then can choose $\gamma$ optimal to maximize this lower bound.
 
\paragraph*{First bound.} 

Let us write $C_i = \{ z \ge z_0 \mid z \notin Z, c(z) = i \}$ and $C_i' = \{ z < z_0 \mid c(z) = i \}$. Note that the set $[z_0 - 1]$ is the disjoint union of $C_1'$, $C_2'$, and $C_3'$, and $[n] \setminus [z_0 -1]$ is the disjoint union of $C_1$, $C_2$, $C_3$, and $Z$, so that
\begin{equation*}
    n = z_0 - 1 + |C_1| + |C_2| + |C_3| + |Z|.
\end{equation*}
We need to upper bound the cardinality of the sets $C_i$. Write $z_i = \max C_i$, where we follow the convention that $\max \emptyset = - \infty$. We have
\begin{equation}\label{eq:nr_ub}
    \operatorname{nr}(z_i) \le k_0 - 1,
\end{equation}
since for $z_i = -\infty$ the statement is trivially true and for $z_i \neq - \infty$ we have $z_i \notin Z$ but also $z_i \ge z_0$, so the statement follows by definition of $Z$ in \cref{eq:Z}. As every element $y \in (C_i' \cup C_i) \cap \{1, \ldots, z_i-1\} = (C_i' \cup C_i) \setminus \{z_i\}$ is uniquely responsible for a non-rainbow Schur triple $(x,y,z_i)$, it follows that $|C_{i}'| + |C_{i}| - 1 \le \operatorname{nr}(z_i)$, that is $|C_{i}'| + |C_{i}| \le k_0$ by \cref{eq:nr_ub}. In combination, this gives us $n \le \max \left( z_0 + |Z| + 2k_0, |Z| +  3k_0 \right)$ 
and therefore at least one of
\begin{equation}\label{eq:second_bound}
    \alpha + \beta  \ge 1 - 2 \gamma \quad \text{or} \quad \beta \ge 1 - 3\gamma
\end{equation}
has to hold.

\paragraph*{Second bound.} Since $z_0$ was again chosen maximal in \cref{eq:defz0_new}, we have
\begin{align*}
     \sum_{z = 1}^{n} \operatorname{r}(z) \, (n+1-z) & > \sum_{z = z_0+1}^{n} (z-1) \, (n+1-z)
     - k_0 \sum_{z \in Z \setminus \{z_0\}} (n+1-z) \\
     & \ge \sum_{z = z_0+1}^{n} (z-1) \, (n+1-z)
     - k_0 \sum_{z = z_0}^{z_0 + |Z|} (n+1-z) \\
     & = \big( 1/6 - \alpha^2/2 + \alpha^3/3 + (\alpha + \beta/2 - 1) \, \gamma \beta + o(1) \big) \, n^3.
\end{align*}
Using \cref{eq:Trcard}, this implies
\begin{equation}\label{eq:third_bound}
    \alpha^2/2 - \alpha^3/3 - (\alpha + \beta/2 - 1) \, \gamma \beta - 1/10 \ge o(1).
\end{equation}

\begin{figure}
    \centering
    \includegraphics[width=\linewidth]{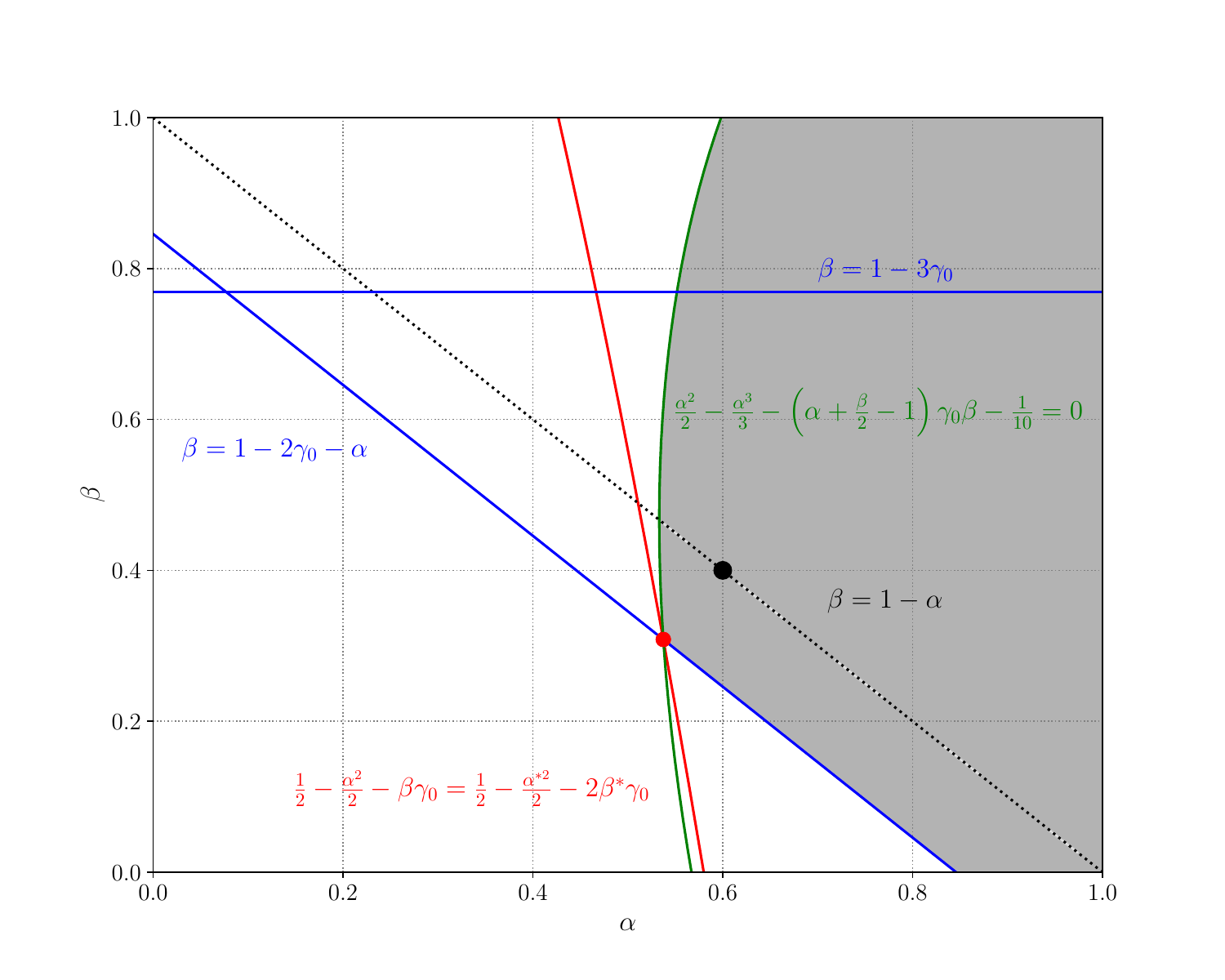}
    \caption{Region of feasible $\alpha$ and $\beta$ for fixed $\gamma_0= 0.077102$ with the first bound from \cref{eq:second_bound} in blue and the second bound from \cref{eq:third_bound} in green. The optimal point is in red with the corresponding contour line for the objective from \cref{eq:SRupper}. The black dot corresponds to our lower bound construction.}
    \label{fig:minmax}
\end{figure}

\paragraph*{Optimizing over the bounds.}
To bound the size of $S_R$ using \eqref{eq:SRupper}, it remains to solve
\begin{equation*}
    \min_{\gamma} \max \left\{ 1/2 - \alpha^2/2 -
    \beta \, \gamma \mid  0 \le \alpha, \beta \le 1 \text{ satisfying \eqref{eq:second_bound} and \eqref{eq:third_bound}} \right\},
\end{equation*}
where obviously $0 \le \gamma \le 1$. Choosing $\gamma_0 = 0.077102$, one can easily see that the first inequality in \cref{eq:second_bound} has to hold due to \cref{eq:third_bound}, see~\cref{fig:minmax}. The optimal values for $\alpha$ and $\beta$ for the fixed $\gamma_0$ therefore need to lie at the intersection of
\begin{equation}
    \beta_1 ( \alpha) = 1 - 2\gamma_0 - \alpha
\end{equation}
and 
\begin{equation}
    \beta_2 ( \alpha) = -(\alpha - 1) - \sqrt{(\alpha - 1)^2 + \frac{2}{\gamma_0} \left( \frac{\alpha^2}{2} - \frac{\alpha^3}{3} - \frac{1}{10} \right)}
\end{equation}
which is given by the unique root in $[0,1]$ of
\[
-\frac{2}{3\gamma_0}\alpha^3
+\left(1+\frac{1}{\gamma_0}\right)\alpha^2
-2\alpha
+\left(1-\frac{1}{5\gamma_0}-4\gamma_0^2\right)=0,
\]
which is 
\[
\alpha^\ast
=
\frac{538551}{1000000}
+
2\sqrt{\frac{638805538803}{3\cdot 10^{12}}}
\;
\cos\!\left(
\frac{1}{3}
\arccos\!\left(
\frac{366225273142527}{98258725691701849}
\right)
+\frac{4\pi}{3}
\right).
\]
With $\beta^\ast = 1 - 2\gamma_0 - \alpha^\ast$ an upper bound on the fraction of strictly less than $0.66364$ follows.

\section{Concluding remarks}

There are two clear avenues for improvement in the proof of the upper bound: first, the choice of $z_0$ and $Z$ is suboptimal because it neither accounts for the partially modular nature of the extremal construction nor captures the linear dependence of $\operatorname{nr} (z)$ on $z$ in that construction.
The first bound therefore falls short, as can be seen in \cref{fig:minmax}, and 
modifying this part of the argument by potentially improving it up to $\alpha \ge 1 - \beta$ for the right choice of $\gamma$ would probably yield the biggest immediate improvement.
Second, the application of the bound due Balogh et al.~\cite{Balogh2017} ignores that the family of $3$-edge-colorings of $K_{n+1}$ induced by $3$-colorings of $[n]$ does not include the known extremal coloring maximizing the number of rainbow triangles. In particular, the coloring $c_0$ establishing the lower bound gives $|T_R(c_0)| = \big( 0.05\overline{3} + o(1) \big) \, n^3$, whereas the general upper bound in \cref{eq:Trcard} had a constant of $0.0\overline{6}$. Obtaining a tight upper bound would therefore probably require completely avoiding the translation of the problem into an auxiliary graph theoretic formulation.

This begs the question whether any of the three approaches that have been successful for resolving the minimum number of monochromatic Schur triples could perhaps also be helpful here. Robertson and Zeilberger~\cite{robertson19982} reformulated the problem as the discrete optimization problem of determining the minimum of
\begin{equation*}
    F (x_1, \ldots , x_n) = \sum_{\substack{1  \le i < j \le n \\ i + j \le n}} \left( x_i x_j x_{i+j} + (1 - x_i)(1 - x_j)(1 - x_{i+j}) \right)
\end{equation*}
over the $n$-dimensional hypercube $\{0, 1\}^n$ and then (in part computationally) studied its discrete partial derivatives. This approach does not seem to readily extend to more than two colors without the formulation of the underlying optimization problem becoming unwieldy.
Schoen~\cite{schoen1999number} and Datskovsky~\cite{datskovsky2003number} on the other hand reformulate the problem using the Fourier transform $f_k(x) = \exp (2 \pi i k x) $, noting that the number of monochromatic Schur triples in a coloring of $[n]$ expressed as a bipartition $[n] = R \sqcup B$ can be counted through 
\begin{equation*}
    \int_0^1 f_R(x) \, f_R(x) \, f_R(-x) + f_B(x) \, f_B(x) \, f_B(-x) \, dx,
\end{equation*}
where $f_R(x) = \sum_{k \in R} f_k(x)$ and $f_B(x) = \sum_{k \in B} f_k(x)$. Briefly switching to $\mathbb{Z}_n$, this yields the the very simple expression of $\big( |R|^3 + |B|^3 \big) / n$ for the number of monochromatic Schur triples. The difficulty in extending  this approach in order to study rainbow Schur triples likewise seems to lie in the increase of the number of colors; Cameron, Cilleruelo, and Serra~\cite{Cameron2007} determined that in a three coloring of $\mathbb{Z}_n$ expressed as a tripartition $[n] = R \sqcup G \sqcup B$ the number of monochromatic Schur triples is  $\big( 3 |R|^2 + 3 |G|^2 + 3 |B|^2 - n^2 + \operatorname{r} \big) / 2$, where $\operatorname{r}$ denotes the number of rainbow Schur triples, but no approach to obtain a tight lower bound on the minimum number of monochromatic Schur triples based on this has been proposed. It should be noted that the corresponding problem in graph theory, determining the minimum number of monochromatic triangles, has been resolved for three and four colors~\cite{cummings2013monochromatic, kiem2023four}. Finally, directly applying ideas behind the techniques used by Balogh et al.~\cite{Balogh2017} in the additive setting brings its own set of problems, since formulating flag algebras for anything other than additive structures in $\mathbb{F}_q^n$ has proven challenging~\cite{rue2023rado}.

\medskip

It of course also makes sense to ask a similar question for other additive structures besides Schur triples, most notably for $k$-term arithmetic progressions ($k$-AP), that is for tuples $(x, x+d, \ldots, x+(k-1) \, d)$ with common difference $k \ge 1$.
Radoi\v{c}i\'c asked under what density requirements a three-coloring of $[n]$ must contain for $3$-term arithmetic progressions ($3$-AP), that is a solution to $x + y = 2z$. This was resolved by showing that the smallest color class can have density as low as $1/6$ while still guaranteeing the existence of rainbow $3$-AP~\cite{jungic2003rainbow, butler2014rainbow, axenovich2004rainbow}. Regarding the maximum number of rainbow arithmetic progressions, the only direct reference we could find was by Jungi\'c et al.~\cite{jungic2003rainbow}, who state in the concluding section of their paper that ``(i)t is easy to show that the maximal number of rainbow AP(3)s over all equinumerous 3-colorings of $[3n]$ is $\lfloor 3n^2/2 \rfloor $, this being achieved for the unique 3-coloring with color classes $R = \{n \mid n \equiv 0 \bmod 3 \}$, $B = \{ n \mid n \equiv 1 \bmod 3 \}$ and $G = \{ n | n \equiv 2 \bmod 3\}$.'' Here we state the obvious generalizations of both the upper and lower bound, which align when $k$ is prime but otherwise leave a (significant) gap.
We again conjecture that the lower bound is tight and the underlying construction (with the obvious exception of $k = 2$) unique.
\begin{proposition}\label{prop:rainbow-ap}%
    The maximum fraction of $k$-term arithmetic progressions that can be rainbow in a $k$-coloring of the first $n$ integers asymptotically is between $\prod_{i=1}^m(1 - 1/p_i)$ and $(1 - 1/k)$ if $k$ has the prime decomposition $k = p_1^{a_1} \cdots p_m^{a_m}$.
\end{proposition}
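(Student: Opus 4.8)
My plan is to prove the two bounds of \cref{prop:rainbow-ap} separately, the lower bound by a modular construction and the upper bound by a counting argument exploiting the structure of $k$-APs modulo the prime divisors of $k$.

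\textbf{Lower bound $(1-1/k)$.} I would use the coloring $c: [n] \to \{1,\dots,k\}$, $i \mapsto (i \bmod k) + 1$, i.e.\ the natural residue coloring. A $k$-AP $(x, x+d, \dots, x+(k-1)d)$ is rainbow precisely when the residues $x, x+d, \dots, x+(k-1)d$ are all distinct modulo $k$, which happens if and only if $\gcd(d,k)=1$. Among the common differences $d$ with $1 \le d \le n/(k-1)$ (roughly the admissible range so the whole progression fits in $[n]$, though more carefully one sums over all valid $(x,d)$ pairs), the fraction with $\gcd(d,k)=1$ is asymptotically $\varphi(k)/k = \prod_{i=1}^m(1-1/p_i)$. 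Wait — that gives the \emph{upper} bound value, not $1-1/k$; the point is that the residue coloring is optimal only up to the prime structure. For the stated lower bound $1-1/k$ I would instead use a coloring based on intervals, or more likely the residue coloring but observe that a $k$-AP is automatically non-rainbow only when two of its terms collide mod $k$; a cleaner route is to count directly the number of $(x,d)$ with the progression inside $[n]$ (asymptotically $\tfrac{1}{2(k-1)}n^2$ total $k$-APs) and note that the fraction that is \emph{not} rainbow under the residue coloring is exactly the fraction of $d$ with $\gcd(d,k)>1$. I would pick whichever simple coloring cleanly yields $1-1/k$: the natural candidate is to colour by $\lfloor k i/n \rfloor$-type intervals adjusted so that a $k$-AP is rainbow unless its common difference is "too large", yielding a $(1-1/k)$ fraction; the details of choosing the cut-points to optimise this is the routine part.

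\textbf{Upper bound $\prod_{i=1}^m(1-1/p_i)$.} The key structural fact: for any prime $p \mid k$, if a $k$-AP $(x, x+d,\dots,x+(k-1)d)$ has $p \mid d$, then all $k$ terms are congruent mod $p$, hence (since $p \le k$ and $p$ divides $k$) two of the first $p$ terms share a residue... no — the sharper statement is: consider the coloring restricted to residues mod $p$; since $p \mid d$ forces all terms equal mod $p$, while $p \nmid d$ forces $x, x+d, \dots, x+(p-1)d$ to hit all $p$ residues mod $p$. The plan is: for each fixed $3$-coloring — rather, $k$-coloring — $c$ of $[n]$, bound the number of rainbow $k$-APs by the number of $k$-APs whose common difference $d$ satisfies $\gcd(d,k)=1$, since if some prime $p \mid k$ also divides $d$ then... this last implication needs: if $p \mid d$ the $k$-AP cannot be rainbow. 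That is false in general (the terms can still get distinct colours). So instead I would argue probabilistically/structurally: a rainbow $k$-AP uses $k$ distinct colours on $k$ terms, i.e.\ the colour map is a bijection on the progression. The real obstacle — and the place the prime structure enters — is to show that the number of such "colour-rainbow" progressions is at most $\big(\prod(1-1/p_i)+o(1)\big)$ times the total. I expect the honest argument runs through $\mathbb{Z}_n$ (as in the Cameron–Cilleruelo–Serra reformulation) where a $k$-AP with difference $d$ is rainbow only if the $k$ shifts $0,d,\dots,(k-1)d$ are distinct in $\mathbb{Z}_k$, forcing $\gcd(d,k)=1$, and the count of admissible $d$ is $\varphi(k)n$; translating back to $[n]$ costs only lower-order terms. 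The main obstacle is making the "rainbow $\Rightarrow \gcd(d,k)=1$" step rigorous in $[n]$ rather than $\mathbb{Z}_n$, since in $[n]$ a progression with $\gcd(d,k)>1$ could a priori still be rainbow if the colouring is not residue-based; I believe the resolution is that this can only happen for $o(n^2)$ progressions because such a progression forces a colour-coincidence pattern that a counting/averaging argument over $[n]$ controls — this is the step I would expect to need the most care, and it is presumably why the authors only claim it when $k$ is prime as the tight case.
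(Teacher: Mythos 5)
You have swapped the two bounds, and this derails both halves of the argument. Since $\prod_{i=1}^m(1-1/p_i) = \prod_{p \mid k}(1-1/p) \le 1 - 1/k$, with equality exactly when $k$ is prime, the only consistent reading of \cref{prop:rainbow-ap} is that $\prod_{i=1}^m(1-1/p_i)$ is the \emph{lower} bound and $1-1/k$ is the \emph{upper} bound. Your analysis of the residue colouring mod $k$ is correct and already yields the lower bound: a $k$-AP with common difference $d$ is rainbow if and only if $\gcd(d,k)=1$, and the proportion of such $d$ is $\varphi(k)/k = \prod_{i=1}^m(1-1/p_i)$. This is exactly the paper's construction. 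But you then discard it and go hunting for an interval colouring achieving the larger value $1-1/k$; no such construction is known (the paper conjectures $\prod_{i=1}^m(1-1/p_i)$ is the true maximum), so that search is not a "routine part" --- it cannot succeed if the conjecture is correct.

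Symmetrically, your upper-bound section tries to show that \emph{every} $k$-colouring has at most a $\prod_{i=1}^m(1-1/p_i)$ fraction of rainbow $k$-APs. That is the conjectured tight answer, not what the proposition claims, and your proposed route (rainbow $\Rightarrow \gcd(d,k)=1$) is, as you yourself concede, false for colourings that are not residue-based; the hoped-for $o(n^2)$ error term has no justification. The bound you actually need is the weaker $1-1/k$, and the paper's argument is entirely different: a rainbow $k$-AP requires in particular $c(x_1) \ne c(x_k)$, while $x_k \equiv x_1 \pmod{k-1}$ because $x_k - x_1 = (k-1)d$. Partitioning $[n]$ into the classes $C_{i,r} = \{x \in [n] : c(x) = i,\ x \equiv r \pmod{k-1}\}$, the number of rainbow $k$-APs is at most $\tfrac12 \sum_{r} \sum_{i \ne i'} |C_{i,r}|\,|C_{i',r}| \le n^2/(2k)$ by Cauchy--Schwarz applied within each residue class $r$, which against the total count $\big(1/(2k-2)+o(1)\big)n^2$ gives the fraction $(k-1)/k = 1-1/k$. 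Nothing in your proposal points toward this counting step, so as written neither bound of the proposition is established.
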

\begin{proof}
    Any choice of $1 \le x_1 < x_k \le n$ outlines a $k$-term arithmetic progression if and only if $k - 1$ divides $x_k - x_1$, in which case that arithmetic progression is $(x_1, x_1 + d, x_1 + 2d, \ldots, x_1 + (k-1) \, d)$ with  $x_1 + (k-1) \, d = x_k$ and $d = (x_k - x_1) / (k-1)$. As a consequence, we have $n - (k-1) \, d$ arithmetic progressions with common difference $1 \le d \le \lfloor (n-1)/(k-1) \rfloor$, and therefore
    \begin{equation}\label{eq:nr_of_aps}
        \big( 1/(2k-2) + o(1) \big) \, n^2
    \end{equation}
    arithmetic progressions in $[n]$ overall.
        
    \paragraph{Lower bound} Coloring $[n]$ with $c: [n] \to \{1, \ldots, k\}, \, i \mapsto i + 1 \bmod k$, any arithmetic progression with common difference $d$ is rainbow if and only if $\gcd(d, k) = 1$. Using Euler's totient function, it follows that a $\prod_{i = 1}^m (1 - 1/p_i)$ fraction of all arithmetic progressions is rainbow.

    \paragraph{Upper bound} Fix some coloring $c : [n] \rightarrow \{1, \ldots, k\}$. For $1 \le i \le k$ and $0 \le r \le k-2$, let $C_{i, r} = \{x \in [n] \mid c(x) = i \text{ and } r = x \bmod k - 1\}$. We want to count rainbow $k$-term arithmetic progressions $(x_1, x_2, \dots, x_k)$ when coloring $[n]$ with $c$.
    We have $n$ choices for $x_1$ and, briefly allowing for $x_k < x_1$, we have $\sum_{i \neq c(x_1)} |C_{i, r}|$ choices for $x_k$ if $r = x_1 \bmod k-1$. Correcting for $x_k > x_1$ through an overall factor of $1/2$, the number of rainbow $k$-term arithmetic progressions is therefore upper bounded by
    \[
       \left( \sum_{i, r} |C_{i, r}| \sum_{i' \not = i} |C_{i', r}| \right) / 2 = \sum_{i\not=i',r} |C_{i, r}|  |C_{i', r}| / 2 \le \frac{n^2}{2k},
    \]
    where the inequality holds since by Cauchy-Schwarz
    \[
    \sum_{i \not= i'} |C_{i, r}|  |C_{i', r}| = \left(\sum |C_{i, r}|\right)^2 - \sum |C_{i, r}|^2 \le \frac{k-1}{k}\left(\sum |C_{i, r}|\right)^2 = \frac{n^2}{k(k-1)}
    \]
    for any $r$. This gives an upper bound on the fraction of $(k-1)/k$.
\end{proof}

We hope this work motivates further research into these types of additive anti-Ramsey questions and believe that the upper bounds, both in \cref{thm:rainbow-schur-triple} and in \cref{prop:rainbow-ap}, are amenable to improvements using alternative techniques.

\end{document}